\newcommand*{\Q}{\ensuremath{\mathbb Q}}
\newcommand*{\N}{\ensuremath{\mathbb{N}}}
\newcommand*{\C}{\ensuremath{\mathbb{C}}}
\renewcommand*{\epsilon}{\varepsilon}
\newtheorem{theorem}{Theorem}
\newtheorem{definition}[theorem]{Definition}
\newtheorem{corollary}[theorem]{Corollary}
\newtheorem{lemma}[theorem]{Lemma}
\begin{document}
\title[Random variables with an invariant random shift]{Random variables with an invariant random shift in compact metrisable abelian groups}
\author{Michał Stanisław Wójcik}
\email{michal.s.wojcik@gmail.com}
\begin{abstract}
The main result of this paper states that for independent random variables $X, Y$ taking values in a compact metrisable abelian group, $X + Y$ has the same distribution as $X$, if and only if there exists a compact subgroup $A$ such that $P(Y\in A)=1$ and $X + a$ has the same distribution as $X$ for all $a\in A$. As a conclusion from the above it is shown that for independent random variables $X, Y$ such that $X+Y$ has the same distribution as $X$, $X+Y$ and $Y$ are also independent. It becomes also apparent that the distribution of $X$ is the Haar measure (uniform distribution) if for each open set $U$, $P(Y\in U) > 0$.
\end{abstract}
\maketitle

\subsection*{Important Note} After the first version of this paper was published on arxiv.org, I have learnt that my main result follows immediately from Choquet-Deny theorem. The condition $X + Y \sim X$ on a compact abelian group can be expressed as convolution equation of unknown Borel probability  measure $\nu$ and known Borel probability  measure $\mu$:
$$
\nu \ast \mu = \nu.
$$
For compact abelian group it follows from \cite{cd} that $\nu$ is periodic with periods in the subgroup generated by the support of $\mu$, from which immediately follows Theorem \ref{abelian_result}. The original result of G. Choquet and J. Deny was proven for locally compact abelian groups and in such case we require that $\nu$ must be bounded in some special sense (for details in English e.g. \cite[2.3]{Prio}). This ``boundary'' condition is always satisfied for each Borel probability measure on compact abelian groups.

\subsection*{Motivation}The direct inspiration for this paper was the latest research of Michał Ryszard Wójcik \cite{Woj}, where among other things he analyses independent real random variables $X$ and $Y$ such that $X + Y \mod 1 \sim X$. He obtains the results equivalent to Corollary \ref{mrw}. To show this he compiles some known results for characteristic functions of real random variables. From the theoretical point of view, the main inconvenience here is the fact, that even though we have $\varphi_{X + Y} = \varphi_X\cdot \varphi_Y$ for characteristics, we can't apply this smoothly to $X + Y \mod 1$. Luckily for $X,Y\in [0,1)$, we have if $\varphi_X(2\pi n)=\varphi_Y(2\pi n)$ for all $n\in\N$, then $X\sim Y$ \cite[XIX.6]{feller} and
for independent $X$ and $Y$, we have $\varphi_{(X+Y)\bmod{1}}(2\pi n)=\varphi_X(2\pi n)\varphi_Y(2\pi n)$ for all $n\in\N$ \cite{Woj}, which makes Wójcik's approach possible.\\
Since addition modulo $1$ on $[0,1)$ is equivalent to the multiplication on the circle group, we may expect that the methods of measure theory applied to topological groups will be adequate here. For locally compact second countable groups, the Haar measure is analogous to the uniform distribution. Since the theory of measure on locally compact abelian groups includes also characteristic functions (Fourier-Stillest transforms) of random variables taking values in the group (for details e.g \cite[1.3]{Rud}), we can use them explicitly, instead of the inconvenient, in this case, characteristics of real random variables. This makes the result a bit easier to achieve and more general.  

\subsection*{Result}
It is known that for random variables $X,Y$ taking values in a compact second countable group, if $X$ has the uniform distribution then $X+Y\sim X$ (e.g. \cite{Sta}). The main result of this paper states that for independent random variables $X$ and $Y$ taking values in a compact metrisable abelian group, $X + Y \sim X$ if and only if there exists a compact subgroup $A$, such that $P(Y\in A)=1$ and $X + a$ has the same distribution as $X$ for all $a\in A$. This result is given as Theorem \ref{abelian_result} and Theorem \ref{converse}. As a conclusion from the above, it is shown (Theorem \ref{independence}) that for independent $X, Y$ such that $X+Y\sim X$, $X+Y$ and $Y$ are also independent.

\
\

In this paper we will usually denote the group operation by '$+$' and identity element as 0. The only exception will be the circle group, where the group operation is complex multiplication. We will begin with recalling the basic definitions for the topological group and measures.

\begin{definition}
Let $G$ be a locally compact group. A complex function $\gamma:G\to \C$ is a character if and only if $\gamma$ is a continuous homomorphism ($\gamma(x + y) = \gamma(x)\cdot\gamma(y)$ for all $x,y\in G$) and $|\gamma(x)| = 1$ for all $x\in G$. The set of all characters will be called the dual group to $G$.
\end{definition}
    
\begin{definition}\label{init}
Let $G$ be a locally compact abelian group and $\Gamma$ be the dual group to $G$.
\begin{enumerate}
\item
$e(x) \overset{def}= 1$ for $x\in G$.
\item
$(x,y) \overset{def} = y(x)$ for all $x\in G$ and $y\in \Gamma$.
\item
$\mu \ast \nu$ is the convolution of measures $\mu$ and $\nu$.
$$
\mu \ast \nu(E) \overset{def}= \int_G\int_G \mathds{1}_E(x+y) d\mu(x)d\nu(y)
$$  
\item
Let $\mu$ will be a Borel probability regular measure on $G$. $\hat{\mu}$ a characteristic function of $\mu$, if and only if
$$
\hat{\mu}(y) \overset{def}= \int_G (x,\gamma) d\mu(x) \text{ for all } \gamma\in \Gamma.
$$ 
\end{enumerate}
\end{definition}

Let me cite the well know fact (e.g \cite[1.3.3]{Rud}).

\begin{theorem}\label{quick}
If $G$ is a locally compact abelian group and $\mu, \nu$ are Borel probability regular measures on $G$, then $\widehat{\mu\ast\nu} = \hat{\mu}\cdot\hat{\nu}$.
\end{theorem}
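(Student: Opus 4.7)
The plan is to unwind the definitions and reduce the claim to two standard facts: that characters are multiplicative homomorphisms into $\C$, and that integration against a convolution can be rewritten as an iterated integral via Fubini's theorem.

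First I would fix $\gamma\in\Gamma$ and start from the left-hand side
\[
\widehat{\mu\ast\nu}(\gamma) = \int_G (x,\gamma)\, d(\mu\ast\nu)(x).
\]
The next step is to establish the ``change of variables'' identity
\[
\int_G f(x)\, d(\mu\ast\nu)(x) = \int_G\int_G f(x+y)\, d\mu(x)\, d\nu(y)
\]
for bounded Borel $f$. By Definition \ref{init}(3), this holds by definition for $f = \mathds{1}_E$. Extending by linearity to simple functions and then, since $|(x,\gamma)|=1$ and both measures are finite, by dominated convergence to bounded Borel $f$, one obtains the identity for $f(x)=(x,\gamma)$.

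Now I would apply the defining property of a character: since $\gamma$ is a homomorphism, $(x+y,\gamma) = \gamma(x+y) = \gamma(x)\gamma(y) = (x,\gamma)(y,\gamma)$. Substituting this into the iterated integral produces a product integrand, and then Fubini's theorem (whose integrability hypothesis is automatic because the integrand is bounded by $1$ and the measures are finite) factors the double integral:
\[
\int_G\int_G (x,\gamma)(y,\gamma)\, d\mu(x)\, d\nu(y) = \Bigl(\int_G (x,\gamma)\, d\mu(x)\Bigr)\Bigl(\int_G (y,\gamma)\, d\nu(y)\Bigr) = \hat{\mu}(\gamma)\hat{\nu}(\gamma).
\]

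The only real obstacle is the passage from the indicator-function definition of $\mu\ast\nu$ to the integral-form change-of-variables formula, since Definition \ref{init}(3) gives convolution only on sets. This, however, is a standard measure-theoretic extension argument (indicator $\to$ simple $\to$ bounded Borel via dominated convergence), and the boundedness of characters makes all integrability conditions trivial. Everything else is a direct computation.
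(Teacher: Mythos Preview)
Your proof is correct and is exactly the standard argument: extend the defining formula for $\mu\ast\nu$ from indicators to bounded Borel functions, then use multiplicativity of $\gamma$ and Fubini. There is nothing to compare against, however, because the paper does not supply its own proof of this theorem; it merely cites it as a well-known fact from \cite[1.3.3]{Rud}.
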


We will also recall uniqueness theorem (e.g. \cite[1.3.6]{Rud}).

\begin{theorem}\label{uniq}
If $G$ is a locally compact abelian group and $\mu, \nu$ are Borel probability regular measures on $G$ such that $\hat{\mu} = \hat{\nu}$, then $\mu = \nu$.
\end{theorem}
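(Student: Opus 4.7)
My plan is to pass from equality of Fourier-Stieltjes transforms to equality of integrals against a sufficiently rich class of continuous test functions, and then invoke the Riesz representation theorem on $G$. The initial step is immediate: by linearity, $\hat\mu = \hat\nu$ gives $\int p\,d\mu = \int p\,d\nu$ for every trigonometric polynomial $p = \sum_{i=1}^n c_i \gamma_i$ with $\gamma_i \in \Gamma$ and $c_i \in \C$. The task is then to upgrade this from trigonometric polynomials to a class that determines the measures.

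For the compact case, which is what this paper actually uses, the argument closes cleanly via Stone-Weierstrass. The set $\mathcal{T}$ of trigonometric polynomials is a unital $\ast$-subalgebra of $C(G)$: it contains the constant character $e$; it is closed under multiplication because the pointwise product of two characters is again a character; and it is closed under complex conjugation, since $\overline{\gamma(x)} = \gamma(-x)$ defines another character. By Pontryagin duality $\Gamma$ separates the points of $G$, so $\mathcal{T}$ does as well, and the complex Stone-Weierstrass theorem makes $\mathcal{T}$ uniformly dense in $C(G)$. Since $\mu$ and $\nu$ are finite, the bound $|\int f\,d\mu - \int f\,d\nu| \le \|f-p\|_\infty(\mu(G)+\nu(G))$ upgrades the equality of integrals from $\mathcal{T}$ to all of $C(G)$, and Riesz representation then forces $\mu = \nu$.

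In the genuinely non-compact locally compact case the main obstacle is that characters do not lie in $C_0(G)$, so Stone-Weierstrass as stated does not apply to the family of trigonometric polynomials. The standard substitute is to integrate against inverse Fourier transforms of integrable functions on $\Gamma$: for $g \in L^1(\Gamma)$, set $\check g(x) = \int_\Gamma (x,\gamma)\,g(\gamma)\,d\gamma$, which lies in $C_0(G)$ by the Riemann-Lebesgue lemma; Fubini then expresses $\int_G \check g\,d\mu$ as an integral on $\Gamma$ depending only on $\hat\mu$, and hence this quantity agrees for $\mu$ and $\nu$. To conclude, one needs the density of $\{\check g : g \in L^1(\Gamma)\}$ in $C_0(G)$; this is the hard part, extracted from the Gelfand theory of the Banach algebra $L^1(G)$ (equivalently, from Plancherel--inversion theory). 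With that density in hand, Riesz representation finishes the argument exactly as in the compact case. In summary, the compact case is Stone-Weierstrass plus Riesz, while the non-trivial content of the locally compact case is the density of Fourier transforms of integrable functions in $C_0(G)$.
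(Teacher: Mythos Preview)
The paper does not supply its own proof of this statement: it is quoted verbatim as a known fact with a reference to Rudin~\cite[1.3.6]{Rud}, so there is nothing in the paper to compare your argument against.

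Your sketch is correct. The compact case is complete as written: trigonometric polynomials form a conjugation-closed unital subalgebra of $C(G)$ that separates points by Pontryagin duality, Stone--Weierstrass gives uniform density, and Riesz finishes. For the general locally compact case, your reduction via Fubini to the density of $\{\check g : g\in L^1(\Gamma)\}$ in $C_0(G)$ is the right move, and you correctly identify this density as the non-trivial input. One small remark: that density can itself be obtained from the locally compact version of Stone--Weierstrass once you know the Fourier algebra $A(G)$ is a self-adjoint subalgebra of $C_0(G)$ that separates points and vanishes nowhere; establishing those last two properties is where the real work (approximate identities, or Plancherel/inversion) hides, so your description of the dependency chain is accurate.
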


We will keep in mind the following folklore fact (e.g. \cite[412E, 412W]{Fre}).

\begin{theorem}
If $G$ is a compact second countable space, then each Borel probability measure on $G$ is regular.
\end{theorem}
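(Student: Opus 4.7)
The plan is to reduce to a compact metric space and then run a standard good-sets argument. A compact Hausdorff second countable space is metrisable by Urysohn's metrisation theorem, so I fix a compatible metric $d$ on $G$. With $d$ in hand, I want to show that any Borel probability measure $\mu$ on $G$ admits, for every Borel set $E$ and every $\varepsilon>0$, a closed $F$ and an open $U$ with $F\subseteq E\subseteq U$ and $\mu(U\setminus F)<\varepsilon$. Once this is proved, regularity follows since closed subsets of the compact space $G$ are compact, upgrading the closed approximation from below to a compact approximation.

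The core is a monotone-class style argument. Let $\mathcal{A}$ denote the collection of Borel sets $E$ for which the above approximation property holds. Symmetry of the definition under $F\leftrightarrow G\setminus U$ gives closure under complements immediately. For closure under countable unions, given $E_n\in\mathcal{A}$ and $\varepsilon>0$, I choose $F_n\subseteq E_n\subseteq U_n$ with $\mu(U_n\setminus F_n)<\varepsilon/2^{n+2}$. Then $U:=\bigcup_n U_n$ is open with $\mu(U\setminus\bigcup_n F_n)<\varepsilon/2$. The set $\bigcup_n F_n$ is only $F_\sigma$ in general, but by continuity of $\mu$ from below I can pick $N$ so that $F:=\bigcup_{n\le N}F_n$ (closed as a finite union) satisfies $\mu(\bigcup_n F_n\setminus F)<\varepsilon/2$, giving $\mu(U\setminus F)<\varepsilon$.

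To see that $\mathcal{A}$ contains every closed set $C$, I use the metric: set $V_n:=\{x\in G:\dist(x,C)<1/n\}$, which is open, with $C=\bigcap_n V_n$. Continuity of $\mu$ from above yields $\mu(V_n)\downarrow\mu(C)$, so for large $n$ the pair $F:=C$, $U:=V_n$ witnesses $C\in\mathcal{A}$. Since $\mathcal{A}$ is a $\sigma$-algebra containing the closed sets, it contains all Borel sets, finishing the argument.

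The main delicate point is the countable union step, where one must truncate a potentially non-closed $F_\sigma$ approximation to a closed finite union without spending more than $\varepsilon/2$ of measure; everything else is a direct consequence of metrisability plus compactness. I do not expect any serious obstacle, since each ingredient (Urysohn, the $G_\delta$ representation of closed sets in a metric space, continuity of probability measures) is standard.
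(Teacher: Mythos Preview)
Your argument is correct and is the standard proof: metrise via Urysohn (compact Hausdorff second countable $\Rightarrow$ normal second countable $\Rightarrow$ metrisable), run the good-sets argument to obtain inner-closed and outer-open approximation for every Borel set, then use compactness of $G$ to upgrade closed approximants to compact ones. The countable-union step, where you truncate the $F_\sigma$ to a finite union at cost at most $\varepsilon/2$, is handled correctly via continuity from below.

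There is nothing to compare against in the paper: this theorem is recorded there as a folklore fact with a bare citation to Fremlin and no proof. One minor remark: you tacitly assume $G$ is Hausdorff in order to invoke Urysohn, while the paper's statement omits this hypothesis. This is harmless in context, since every result in the paper concerns compact \emph{metrisable} abelian groups, but it is worth being explicit that the metrisation step needs it.
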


\begin{definition}
Let $G$ be a topological space and $\Omega$ be a probabilistic space. 
\begin{enumerate}
\item
We will say that $X$ is a random variable if $X:\Omega\to G$ and $X^{-1}(B)$ is measurable for any Borel set $B$.
\item
Let $X$ be a random variable. $\mu_X(B) \overset{def}= P(X\in B)$. 
\end{enumerate}
\end{definition}

From Fubini theorem and Theorem \ref{quick}, we get immediately the following.

\begin{theorem}
If $G$ is a compact metrisable abelian group and $X,Y:\Omega\to G$ are independent random variables, then $\hat{\mu}_{X+Y} = \hat{\mu}_X\cdot\hat{\mu}_Y$.
\end{theorem}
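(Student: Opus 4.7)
\medskip

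\noindent\textbf{Proof proposal.}
The plan is to reduce the statement to Theorem \ref{quick} by first establishing the intermediate identity $\mu_{X+Y} = \mu_X \ast \mu_Y$; once this is in hand, the claim follows by taking characteristic functions of both sides.

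First I would verify that the hypotheses of the earlier folklore theorem are met: a compact metrisable group is compact second countable, so $\mu_X$, $\mu_Y$, and $\mu_{X+Y}$ are automatically Borel probability regular measures, putting us in the setting where Definition \ref{init}(3)--(4) and Theorem \ref{quick} apply.

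Next, for a Borel set $B\subseteq G$, I would use independence of $X$ and $Y$ to identify the pushforward of $P$ under $(X,Y)\colon \Omega \to G\times G$ with the product measure $\mu_X\otimes\mu_Y$, and then rewrite
$$
\mu_{X+Y}(B) \;=\; P(X+Y\in B) \;=\; \int_{G\times G} \mathds{1}_B(x+y)\, d(\mu_X\otimes\mu_Y)(x,y).
$$
Fubini's theorem, applied to the nonnegative measurable integrand $(x,y)\mapsto\mathds{1}_B(x+y)$, converts the right-hand side into the iterated integral from Definition \ref{init}(3), yielding $\mu_{X+Y}(B) = \mu_X\ast\mu_Y(B)$ for every Borel $B$, i.e.\ $\mu_{X+Y} = \mu_X \ast \mu_Y$.

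Applying Theorem \ref{quick} then gives $\hat{\mu}_{X+Y} = \widehat{\mu_X\ast\mu_Y} = \hat{\mu}_X\cdot\hat{\mu}_Y$, which is the desired conclusion. There is no genuine obstacle here; the only matter that needs any care is joint Borel measurability of $(x,y)\mapsto x+y$ (which follows from continuity of the group operation) so that Fubini legitimately applies to $\mathds{1}_B(x+y)$.
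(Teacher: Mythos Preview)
Your proposal is correct and follows precisely the route the paper indicates: use independence and Fubini to identify $\mu_{X+Y}$ with $\mu_X\ast\mu_Y$, then invoke Theorem~\ref{quick} to pass to characteristic functions. The paper states this in one line without spelling out the intermediate identity $\mu_{X+Y}=\mu_X\ast\mu_Y$, but your expansion is exactly what is meant.
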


\begin{lemma}\label{one}
If $G$ is a compact metrisable abelian group and $Y:\Omega\to G$ is a random variable, $\Gamma$ is dual to $G$, $\gamma\in \Gamma$ and $\hat{\mu}_Y(\gamma) = 1$, then $P((Y,\gamma) = 1) = 1$.
\end{lemma}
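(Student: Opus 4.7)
The plan is to exploit the fact that $\gamma$ takes values in the unit circle, so $\gamma(Y)$ is a bounded complex random variable whose values lie on the boundary of the closed unit disk $\overline{D}$. The hypothesis $\hat{\mu}_Y(\gamma)=1$ says precisely that $E[\gamma(Y)] = \int_G (y,\gamma)\, d\mu_Y(y) = 1$. Since $1$ is an extreme point of $\overline{D}$, a standard convexity argument forces $\gamma(Y) = 1$ almost surely.

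More concretely, I would split into real and imaginary parts. Because $|(Y,\gamma)| = 1$ pointwise, the real part satisfies $\mathrm{Re}(Y,\gamma) \le 1$ everywhere on $\Omega$. Taking expectations and using $\hat{\mu}_Y(\gamma)=1$, I get
\begin{equation*}
\int_\Omega \bigl(1 - \mathrm{Re}(Y,\gamma)\bigr)\, dP = 1 - \mathrm{Re}\,\hat{\mu}_Y(\gamma) = 0.
\end{equation*}
Since the integrand is non-negative, it vanishes $P$-almost everywhere, so $\mathrm{Re}(Y,\gamma) = 1$ almost surely. Combined with $|(Y,\gamma)| = 1$, this gives $(Y,\gamma) = 1$ almost surely, which is the conclusion.

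There is essentially no obstacle here: the argument uses only the definition of $\hat{\mu}_Y$, the fact that characters have modulus $1$, and the elementary principle that a non-negative integrable function with zero integral is a.e.\ zero. The only mild subtlety is keeping track of measurability of $\mathrm{Re}(Y,\gamma)$, which is immediate because $\gamma$ is continuous and therefore Borel measurable, so its composition with the random variable $Y$ is measurable.
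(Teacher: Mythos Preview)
Your proof is correct and follows essentially the same approach as the paper: take real parts, observe that $1-\mathrm{Re}(y,\gamma)\geq 0$ has integral zero, deduce $\mathrm{Re}(Y,\gamma)=1$ almost surely, and then use $|(Y,\gamma)|=1$ to conclude. The only cosmetic difference is that you integrate over $\Omega$ with respect to $P$ while the paper integrates over $G$ with respect to $\mu_Y$, which are equivalent by change of variables.
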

\begin{proof}
If $\int_G (x,\gamma) d\mu_Y = 1$, then $\int_G Re(x,\gamma) d\mu_Y = 1$. Thus $\int_G 1 - Re(x,\gamma) d\mu_Y$ = 0. Since $1 - Re(x,\gamma) \geq 0$ for each $x\in G$, $\mu_Y\{y\in G:Re(y,\gamma)=1\} = 1$. But since $|(y,\gamma)| = 1$ for all $y\in G$, we have $\mu_Y\{y\in G:(y,\gamma)=1\} = 1$. 
\end{proof}

Lemma \ref{one} is a simple generalisation of \cite[Proposition 7]{Woj}, which is a well known folklore fact. However, in our case we are not interested in values of $Y$, which in fact helps to simplify the reasoning. Indeed, the proof of the following theorem is a conceptual replacement for the line of reasoning in \cite[8 - 13]{Woj}.

\begin{theorem}\label{abelian_result}
If $G$ is a compact metrisable abelian group and $X,Y:\Omega\to G$ are independent random variables, such that $X\sim X+Y$, then there exists a compact subgroup $A\subset G$ such that $P(Y\in A) = 1$ and $X\sim X+a$ for each $a\in A$.
\end{theorem}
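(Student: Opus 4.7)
The plan is to exploit the characteristic-function machinery assembled in the excerpt. The hypothesis $X \sim X+Y$ together with independence and Theorem \ref{quick} yields the pointwise identity
$$
\hat{\mu}_X(\gamma) \;=\; \hat{\mu}_X(\gamma)\cdot\hat{\mu}_Y(\gamma), \qquad \gamma \in \Gamma,
$$
so on the set $\Gamma_0 := \{\gamma \in \Gamma : \hat{\mu}_X(\gamma)\neq 0\}$ we must have $\hat{\mu}_Y(\gamma)=1$. Lemma \ref{one} then tells us that for each such $\gamma$ there is a $\mu_Y$-null set off of which $(Y,\gamma)=1$.

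The natural candidate for $A$ is then
$$
A \;=\; \bigcap_{\gamma \in \Gamma_0}\ker\gamma \;=\; \{x\in G : (x,\gamma)=1 \text{ for all }\gamma\in \Gamma_0\},
$$
which is closed (an intersection of closed subgroups, each a kernel of a continuous homomorphism into the circle) and hence compact. To conclude $P(Y\in A)=1$ from Lemma \ref{one} I would invoke the fact that, because $G$ is compact metrisable, the dual group $\Gamma$ is discrete and separable and therefore \emph{countable}. A countable union of null sets is null, so the single ``bad'' event $\{Y\notin A\} = \bigcup_{\gamma\in\Gamma_0}\{(Y,\gamma)\neq 1\}$ has probability $0$.

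For the invariance statement I would simply compare characters: for $a\in A$,
$$
\hat{\mu}_{X+a}(\gamma) \;=\; \int_G (x+a,\gamma)\,d\mu_X(x) \;=\; (a,\gamma)\,\hat{\mu}_X(\gamma).
$$
If $\gamma\in\Gamma_0$ then $(a,\gamma)=1$ by the definition of $A$, so both sides agree; if $\gamma\notin\Gamma_0$ then $\hat{\mu}_X(\gamma)=0$ and again both sides vanish. Hence $\hat{\mu}_{X+a}=\hat{\mu}_X$, and Theorem \ref{uniq} delivers $X+a \sim X$.

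The only genuinely delicate point is the countability of $\Gamma$; without it one would only control the individual events $\{(Y,\gamma)=1\}$, not their uncountable intersection, and the identification of $A$ as a $\mu_Y$-conull subgroup would fail. Everything else is a direct computation with characters and an appeal to the uniqueness theorem already quoted.
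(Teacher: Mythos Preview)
Your argument is correct and follows the same route as the paper: derive the identity $\hat{\mu}_X=\hat{\mu}_X\cdot\hat{\mu}_Y$, use Lemma~\ref{one} to force $(Y,\gamma)=1$ a.s.\ on the relevant characters, intersect the kernels using countability of $\Gamma$ (from compact metrisability of $G$) to obtain a compact subgroup $A$ with $\mu_Y(A)=1$, and then verify $\hat{\mu}_{X+a}=\hat{\mu}_X$ by the same two-case split, appealing to Theorem~\ref{uniq}. The only cosmetic difference is that the paper indexes the intersection by $\Lambda=\{\gamma:\hat{\mu}_Y(\gamma)=1\}$ whereas you use $\Gamma_0=\{\gamma:\hat{\mu}_X(\gamma)\neq 0\}\subseteq\Lambda$; your $A$ is therefore possibly larger, but either choice works and the reasoning is identical.
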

\begin{proof}
Let $\Gamma$ be the dual group to $G$. Note that
$$\hat{\mu}_X(\gamma) = \hat{\mu}_X(\gamma)\cdot \hat{\mu}_Y(\gamma) \text{ for all } \gamma\in \Gamma.$$
Let $\Lambda = \{\gamma\in\Gamma: \hat{\mu}_Y(\gamma) = 1\}$. Obviously $e\in \Lambda$, so $\Lambda\not=\emptyset$. Notice that $\hat{\mu}_X(\gamma) = 0$ for all $\gamma \in \Gamma\setminus \Lambda$. Let $A_\gamma = \{y\in G: (y,\gamma) = 1\}$. By Lemma \ref{one}, $\mu_Y(A_\gamma) = 1$ for all $\gamma\in\Lambda$. Since $G$ is compact and metrisable, $\Gamma$ is discrete and second countable (e.g. \cite[V.31]{Pon}). Hence $\Gamma$ has countable many elements. Let $A = \bigcap_{\gamma\in\Lambda}A_\gamma$. Since $A_\gamma$ is a compact subgroup for each $\gamma\in \Gamma$, then $A$ is also a compact subgroup and $\mu_Y(A)=1$.\\
Take any $a\in A$. Treat $a$ as a constant random variable. Thus $\hat{\mu}_a(\gamma) = (a,\gamma)$ for each $\gamma\in\Gamma$. Notice that
$$
\hat{\mu}_{X + a}(\gamma) = \hat{\mu}_X(\gamma)\cdot(a,\gamma) \text{ for each } \gamma\in\Gamma.
$$
For each $\gamma\in\Gamma\setminus \Lambda$, since $\hat{\mu}_Y(\lambda)\not=1$, we have $\hat{\mu}_X(\gamma) = 0$. Thus $\hat{\mu}_X(\gamma) = \hat{\mu}_X(\gamma)\cdot(a,\gamma)$ for all $\gamma\in\Gamma\setminus\Lambda$. On the other hand, for each $\gamma\in\Lambda$, since $(a,\gamma) = 1$, we have $\hat{\mu}_X(\gamma) = \hat{\mu}_X(\gamma)\cdot(a,\gamma)$. Thus generally $\hat{\mu}_{X + a}(\gamma) = \hat{\mu}_X(\gamma) \text{ for all } \gamma\in\Gamma$, so by Theorem \ref{uniq}, $X\sim X + a$.
\end{proof}

Since the Haar measure normalised to $1$ is unique on a compact group, we get immediately the following corollary. 
\begin{corollary}
If $G$ is a compact metrisable abelian group, $X,Y:\Omega\to G$ are independent random variables, such that $X\sim X+Y$ and $P(Y\in U)>0$ for each open $U\subset G$, then the distribution of $X$ is the Haar measure normed to $1$.
\end{corollary}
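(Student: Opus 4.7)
The plan is to combine Theorem \ref{abelian_result} with the topological support hypothesis on $Y$ to force the subgroup of invariance to be all of $G$, and then to invoke the uniqueness of the normalised Haar measure quoted just above the corollary.

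First I would apply Theorem \ref{abelian_result} to obtain a compact subgroup $A \subset G$ with $P(Y \in A) = 1$ and $X \sim X + a$ for every $a \in A$. The next step is to argue that $A = G$: since $G$ is Hausdorff and $A$ is compact, $A$ is closed, so if $A$ were a proper subset then $G \setminus A$ would be a nonempty open set, and the hypothesis $P(Y \in U) > 0$ for every open $U$ would give $P(Y \in G \setminus A) > 0$, contradicting $P(Y \in A) = 1$. Hence $A = G$.

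Once $A = G$ is established, $\mu_X$ is invariant under translation by every element of $G$, i.e.\ it is a translation-invariant Borel probability measure on the compact group $G$, and by the uniqueness of the normalised Haar measure $\mu_X$ must coincide with it. I do not foresee any real obstacle; the only nontrivial ingredient beyond Theorem \ref{abelian_result} is the closedness of compact subgroups in a Hausdorff topological group, which is precisely what lets the topological support condition on $Y$ convert the measure-theoretic statement $P(Y \in A) = 1$ into the set-theoretic conclusion $A = G$.
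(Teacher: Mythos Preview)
Your proposal is correct and is exactly the argument the paper has in mind: the paper does not spell out a proof but merely remarks that the corollary follows immediately from Theorem \ref{abelian_result} together with the uniqueness of the normalised Haar measure on a compact group. You have simply filled in the omitted details (closedness of $A$, so $G\setminus A$ is open, forcing $A=G$ by the support hypothesis on $Y$), which is precisely the intended route.
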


Corollary \ref{mrw} comes from \cite{Woj}. Equipped with Theorem \ref{abelian_result}, we can prove it keeping in mind some basic properties of the circle group and elementary facts of arithmetic. We will assume $0\in \N$. 

\begin{corollary}\label{mrw} 
Let $X,Y:\Omega\to [0,1)$ be independent random variables such that $X\sim X + Y \mod 1$. 
\begin{enumerate}
\item\label{finite_case}
If $0 < N = \min \Big\{n\in\N: P\big(Y \in \{\frac{k}{n}:k\in [0,n)\cap \N\}\big) = 1\Big\} < \infty$, then
$X \sim X + \frac{k}{N}$ for $k=0,\dots, N - 1$.
\item\label{infinite_case}
If $\{q\in \Q\cap[0,1):P(Y = q) > 0\}$ is infinite or $P(Y\in Q) < 1$, then $X\sim U[0,1)$. 
\end{enumerate}
\end{corollary}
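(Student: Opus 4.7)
The plan is to pull the problem back to the circle group $\mathbb{T}$ and then apply Theorem~\ref{abelian_result}. Identifying $[0,1)$ with $\mathbb{T}$ via $t\mapsto e^{2\pi i t}$ makes addition modulo $1$ into the circle group operation, so the hypothesis $X\sim X+Y\bmod 1$ becomes the hypothesis of Theorem~\ref{abelian_result} on the compact metrisable abelian group $\mathbb{T}$. That produces a compact subgroup $A\subseteq\mathbb{T}$ with $P(Y\in A)=1$ and $X\sim X+a$ for every $a\in A$.

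Next I would invoke the classical classification that the closed subgroups of $\mathbb{T}$ are precisely $\mathbb{T}$ itself and the finite cyclic groups $C_n=\{k/n:0\le k<n\}$ for $n\ge 1$. This follows by pulling back under the quotient $\mathbb{R}\to\mathbb{T}$: the closed subgroups of $\mathbb{R}$ containing $\mathbb{Z}$ are exactly $\frac{1}{n}\mathbb{Z}$ and $\mathbb{R}$. The two parts of the corollary then reduce to a case split on which kind of closed subgroup $A$ is.

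For part~(\ref{finite_case}), if $A=\mathbb{T}$ then $X$ is translation-invariant on $\mathbb{T}$, so by uniqueness of Haar measure $X\sim U[0,1)$, which already forces $X\sim X+k/N$ for every $k$. Otherwise $A=C_n$ for some $n\ge 1$; since $P(Y\in C_n)=1$, the minimality of $N$ forces $N\mid n$, and therefore $C_N\subseteq C_n=A$. The translation-invariance of $X$ along $A$ then specialises to $X\sim X+k/N$ for $k=0,\dots,N-1$, as required.

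For part~(\ref{infinite_case}), if $A=C_n$ were finite then $Y$ would take values in the finite rational set $C_n\subset\mathbb{Q}\cap[0,1)$, which contradicts both alternatives of the hypothesis at once: the set $\{q\in\mathbb{Q}\cap[0,1):P(Y=q)>0\}$ would be contained in $C_n$ and hence finite, and also $P(Y\in\mathbb{Q})\ge P(Y\in C_n)=1$. So $A=\mathbb{T}$, and the Haar-measure argument from the previous paragraph gives $X\sim U[0,1)$. The only delicate step in the whole argument is the subgroup classification of $\mathbb{T}$; once that is recorded, the rest is a direct case analysis unpacking Theorem~\ref{abelian_result}.
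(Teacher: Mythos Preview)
Your argument is correct and follows the same overall plan as the paper: identify $[0,1)$ with the circle group, invoke Theorem~\ref{abelian_result} to obtain the compact subgroup $A$, and then analyse $A$. Where you diverge is in that analysis. You appeal directly to the classification of closed subgroups of $\mathbb{T}$ (either all of $\mathbb{T}$ or some $C_n$) and then do a case split; the paper instead works more concretely: for part~(\ref{finite_case}) it picks an atom $p/q$ of $Y$ in lowest terms and uses the Chinese Remainder Theorem to show $C_q\subseteq A$, while for part~(\ref{infinite_case}) it argues that an infinite subset, or a single irrational, generates a dense subgroup, forcing $A=G$. Your route is a bit cleaner and more uniform, since the subgroup classification handles both parts in one stroke; the paper's route avoids quoting that classification but pays with separate ad~hoc arguments. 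One small point worth making explicit in your write-up: the step ``minimality of $N$ forces $N\mid n$'' uses that the set $\{m:P(Y\in C_m)=1\}$ is closed under $\gcd$, since $C_m\cap C_{m'}=C_{\gcd(m,m')}$, so its minimum divides each of its members.
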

\begin{proof}
Put $G = [0,1)$ with topology that glues $0$ and $1$. Addition modulo $1$ is a group operation on $G$. ($G$ is topologically isomorphic with the circle group). By Theorem \ref{abelian_result}, we have such a compact subgroup $A$, that $P(Y\in A) = 1$ and $X + a \mod 1 \sim X$ for all $a\in A$.
We will show (\ref{finite_case}). Since $N > 0$, we have coprime positive integers $p$ and $q$ such that $P(Y=\frac{p}{q}) > 0$. Thus $\frac{p}{q}\in A$. From Chinese reminder theorem, for each integer $r\in[0,q)$ there exist integers $k$ and $l$ such that $lq = kp - r$, hence $l + \frac{r}{q} = k\frac{p}{q}$. Thus $\{0, \frac{1}{q}, \dots, \frac{q-1}{q}\}\subset A$. So $q = N$ and  therefore $X + \frac{k}{N} \mod 1 \sim X$ for $k=1, \dots, N-1$.\\
To prove (\ref{infinite_case}) is enough to remember that any infinite subset of $G$ generates a dense subgroup and that each irrational number generates a dense subgroup.
\end{proof}
\begin{theorem}\label{independence}
If $G$ is a compact metrisable abelian group and $X,Y:\Omega\to G$ are independent random variables, such that $X\sim X+Y$, then $X + Y$ and $Y$ are independent.
\end{theorem}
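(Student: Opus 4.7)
The plan is to exploit Theorem \ref{abelian_result} to reduce the joint distribution of $(X+Y, Y)$ to a product. Applying that theorem, I get a compact subgroup $A\subset G$ with $P(Y\in A)=1$ and $X\sim X+a$ for every $a\in A$. In particular, for every $a\in A$ and every Borel $B\subset G$, shift invariance gives $P(X\in B) = P(X+a\in B) = P(X\in B-a)$.

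Next I would compute $P(X+Y\in B,\, Y\in C)$ for arbitrary Borel sets $B,C\subset G$. Since $P(Y\in A)=1$, I may replace $C$ by $C\cap A$ without loss of generality. By independence of $X$ and $Y$ together with Fubini's theorem,
$$
P(X+Y\in B,\, Y\in C) = \int_{C\cap A} P(X+y\in B)\, d\mu_Y(y).
$$
Now for every $y\in A$ the integrand equals $P(X\in B-y)=P(X\in B)$ by the shift invariance obtained above, so the right-hand side reduces to $P(X\in B)\cdot\mu_Y(C\cap A) = P(X\in B)\cdot P(Y\in C)$.

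Finally, the hypothesis $X\sim X+Y$ yields $P(X\in B)=P(X+Y\in B)$, and therefore
$$
P(X+Y\in B,\, Y\in C) = P(X+Y\in B)\cdot P(Y\in C),
$$
which is exactly the independence of $X+Y$ and $Y$. The only step that requires any care is the invocation of Fubini (to pass from independence of $X,Y$ to an integral representation of the joint probability) and the measurability of $y\mapsto P(X+y\in B)$; both are standard given that $G$ is compact metrisable and $\mu_Y$ is a Borel probability measure, so there is no substantial obstacle. The essential content is really just the invariance property extracted from Theorem \ref{abelian_result}.
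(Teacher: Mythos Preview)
Your proof is correct and follows essentially the same route as the paper: invoke Theorem \ref{abelian_result} to obtain the subgroup $A$, then compute $P(X+Y\in B,\,Y\in C)$ via Fubini and use the shift invariance $P(X+y\in B)=P(X\in B)$ for $y\in A$ to factor the joint probability. The only cosmetic difference is that the paper writes out the Fubini step through the homeomorphism $F(x,y)=(x+y,y)$ and restricts from the outset to Borel $E\subset A$, whereas you intersect an arbitrary Borel $C$ with $A$; both amount to the same computation.
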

\begin{proof}
By Theorem \ref{abelian_result}, we have a compact subgroup $A\subset G$ such that $P(Y\in A) = 1$ and $X\sim X+a$ for each $a\in A$.
Let $F:G^2\to G^2$ such that $F(x,y) = (x+y,y)$. Notice that $F$ is a homeomorphism. Then $F$ and $F^{-1}$ are both Borel.
Take any Borel set $B\subset G$ and any Borel set $E\subset A$.
\begin{equation}\label{first}
P(X + Y \in B \wedge Y\in E) = P((X,Y)\in F^{-1}(B\times E)) =
\end{equation}
$$
\int_G  \mathds{1}_{F^{-1}(B\times E)}(x,y) d\mu_X(x)d\mu_Y(y) = 
\int_G \int_G \mathds{1}_{(B\times E)}(x + y,y) d\mu_X(x)d\mu_Y(y) = 
$$
$$
\int_G \int_G \mathds{1}_B(x+y)\mathds{1}_E (y) d\mu_X(x)d\mu_Y(y) = 
\int_E \left(\int_G \mathds{1}_B(x+y) d\mu_X(x)\right) d\mu_Y(y) =
$$
\begin{equation}\label{sec}
\int_E P(X + y\in B) d\mu_Y(y) = \int_E P(X\in B) d\mu_Y(y) = P(X\in B)P(Y\in E) = 
\end{equation}
$$
P(X + Y\in B)P(Y\in E).
$$
Thus $X$ and $X + Y$ are independent random variables.
\end{proof}

\begin{corollary}
If $G$ is a compact metrisable abelian group and $X,Y:\Omega\to G$ are independent random variables, such that $X\sim X+Y$, then $X\sim X + nY$ for all $n\in\N$.
\end{corollary}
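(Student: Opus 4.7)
The plan is to apply Theorem \ref{abelian_result} to extract the compact subgroup $A\subset G$ with $P(Y\in A)=1$ and $X\sim X+a$ for every $a\in A$, and then simply observe that $nY$ lands in $A$ almost surely, so translating $X$ by $nY$ cannot change its distribution. Induction on $n$ is an alternative, but it is clumsier because $X+nY$ and $Y$ need not be independent.

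First, I would apply Theorem \ref{abelian_result} to obtain $A$. Since $A$ is a (compact) subgroup, $\{Y\in A\}\subset\{nY\in A\}$, hence $P(nY\in A)=1$. Because $nY$ is a Borel function of $Y$ and $X$ is independent of $Y$, the pair $(X,nY)$ has distribution $\mu_X\otimes\mu_{nY}$.

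Then for any Borel set $B\subset G$, a Fubini calculation analogous to the one used in Theorem \ref{independence} gives
\begin{equation*}
P(X+nY\in B)=\int_G P(X+y'\in B)\,d\mu_{nY}(y')=\int_A P(X+y'\in B)\,d\mu_{nY}(y'),
\end{equation*}
and using $X\sim X+y'$ for every $y'\in A$ the integrand collapses to the constant $P(X\in B)$, yielding $P(X+nY\in B)=P(X\in B)$. Since $B$ was arbitrary, $X\sim X+nY$.

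There is no real obstacle; the only point worth stating carefully is that the subgroup $A$ is closed under integer multiples, so the hypothesis $P(Y\in A)=1$ upgrades for free to $P(nY\in A)=1$, which is what makes the translation invariance of $\mu_X$ by elements of $A$ directly applicable. The case $n=0$ is of course trivial and the case $n=1$ is exactly the hypothesis, so the argument really only needs to cover $n\geq 2$, but the integral computation above handles all $n\in\N$ uniformly.
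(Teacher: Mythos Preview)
Your argument is correct. You invoke Theorem \ref{abelian_result} to get the compact subgroup $A$, note that $A$ being a subgroup forces $P(nY\in A)=1$, and then run the Fubini computation of Theorem \ref{converse} with $nY$ in place of $Y$ (indeed, you could simply cite Theorem \ref{converse} directly, since $X$ and $nY$ are independent, $P(nY\in A)=1$, and $X\sim X+a$ for all $a\in A$).

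The paper gives no explicit proof, but the placement of the corollary immediately after Theorem \ref{independence} signals the intended route: induction on $n$, using Theorem \ref{independence} at each step. Your remark that induction is ``clumsier because $X+nY$ and $Y$ need not be independent'' misses exactly this point. Theorem \ref{independence} says that if $Z$ and $Y$ are independent with $Z\sim Z+Y$, then $Z+Y$ and $Y$ are also independent. Taking $Z=X+nY$ and assuming inductively that $Z$ and $Y$ are independent with $Z\sim X$, one gets $\mu_{Z+Y}=\mu_Z\ast\mu_Y=\mu_X\ast\mu_Y=\mu_X$, so $Z+Y\sim X\sim Z$; then Theorem \ref{independence} yields that $Z+Y=X+(n+1)Y$ and $Y$ are independent, closing the induction. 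So the inductive proof is not clumsy at all---it is precisely what Theorem \ref{independence} was set up to deliver. Your approach bypasses Theorem \ref{independence} entirely and returns to the structural content of Theorem \ref{abelian_result} together with its converse; both proofs are short, and yours has the mild advantage of handling all $n$ uniformly without an inductive hypothesis.
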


Now we will show the converse of  Theorem \ref{abelian_result}.
\begin{theorem}\label{converse}
If $G$ is a compact metrisable abelian group and $X,Y:\Omega\to G$ are independent random variables, $A\subset G$, $P(Y\in A) = 1$ and $X\sim X+a$ for each $a\in A$, then $X \sim X + Y$.
\end{theorem}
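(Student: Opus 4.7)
The plan is to mirror the Fourier-analytic strategy of Theorem~\ref{abelian_result} in reverse: show that $\hat\mu_{X+Y}(\gamma) = \hat\mu_X(\gamma)$ for every character $\gamma \in \Gamma$ and conclude by the uniqueness theorem (Theorem~\ref{uniq}). Since $X$ and $Y$ are independent, Theorem~\ref{quick} combined with Fubini gives $\hat\mu_{X+Y}(\gamma) = \hat\mu_X(\gamma)\hat\mu_Y(\gamma)$, so the task reduces to verifying that $\hat\mu_X(\gamma)(1 - \hat\mu_Y(\gamma)) = 0$ for each $\gamma$.

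To establish this, I would fix an arbitrary $\gamma \in \Gamma$ and split into two cases depending on whether $\hat\mu_X(\gamma) = 0$. If $\hat\mu_X(\gamma) = 0$, the identity is trivial. Otherwise I would exploit the hypothesis $X \sim X + a$ for all $a \in A$: treating $a$ as a constant random variable, this says
\[
\hat\mu_X(\gamma) = \hat\mu_{X+a}(\gamma) = (a,\gamma)\,\hat\mu_X(\gamma),
\]
so $\hat\mu_X(\gamma) \neq 0$ forces $(a,\gamma) = 1$ for every $a \in A$. Combined with $P(Y \in A) = 1$, the character $(\,\cdot\,,\gamma)$ takes the value $1$ almost surely under $\mu_Y$, so
\[
\hat\mu_Y(\gamma) = \int_G (y,\gamma)\,d\mu_Y(y) = \int_A 1\, d\mu_Y(y) = 1,
\]
which gives $\hat\mu_X(\gamma)\hat\mu_Y(\gamma) = \hat\mu_X(\gamma)$ in this case as well.

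Assembling the two cases yields $\hat\mu_{X+Y} = \hat\mu_X$ on all of $\Gamma$, and the uniqueness theorem for characteristic functions delivers $X + Y \sim X$. I do not foresee a genuine obstacle here: the only delicate point is the implicit measurability of $A$ (which is used in applying $\int_A$ and in talking about $P(Y\in A)$), but this is already built into the hypotheses of the theorem. Unlike the forward direction, there is no need to appeal to second countability of $G$ or countability of $\Gamma$, since the argument is purely pointwise in $\gamma$.
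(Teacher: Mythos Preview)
Your argument is correct, but it takes a genuinely different route from the paper. The paper does \emph{not} use characters here: it simply reuses the Fubini computation from the proof of Theorem~\ref{independence}, writing
\[
P(X+Y\in B)=P(X+Y\in B,\ Y\in A)=\int_A P(X+y\in B)\,d\mu_Y(y)=\int_A P(X\in B)\,d\mu_Y(y)=P(X\in B),
\]
where the middle equality uses $X\sim X+y$ for $y\in A$. Your proof instead mirrors Theorem~\ref{abelian_result}: from $X\sim X+a$ you extract $(a,\gamma)=1$ whenever $\hat\mu_X(\gamma)\neq 0$, deduce $\hat\mu_Y(\gamma)=1$ in that case, and invoke uniqueness. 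Both are short; the paper's direct computation is a bit more elementary (no Pontryagin duality, no uniqueness theorem) and in fact does not use commutativity at all, while your approach has the virtue of stylistic symmetry with the forward direction. One cosmetic remark: writing $\int_A 1\,d\mu_Y$ presupposes $A$ is Borel, but you do not actually need that---the set $\{y:(y,\gamma)=1\}$ is closed and contains $A$, so $P\big((Y,\gamma)=1\big)=1$ follows directly from $P(Y\in A)=1$ however the latter is interpreted.
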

\begin{proof}
Put $E = A$. Take any Borel $B$. Obviously $P(X+Y\in B) = P(X+Y\in B \wedge Y\in E)$. Next we will follow the calculations from (\ref{first}) to (\ref{sec}). Hence $P(X+Y\in B) = P(X\in B)$.
\end{proof}

\end{document}